\newtheorem{theorem}{Theorem}[section]
\newtheorem{lemma}{Lemma}[section]
\newtheorem{cor}{Corollary}[section]
\newtheorem{pro}{Proposition}[section]
\newtheorem{defn}{Definition}[section]
\DeclareMathOperator{\inte}{int}
\title{Bounds for a solution set of linear complementarity problems over Hilbert spaces }
\author{Projesh Nath Choudhury \thanks{ Department of Mathematics, Indian Institute of Sciences, Bengaluru, India. Email:  projeshc@iisc.ac.in, projeshnc@alumni.iitm.ac.in}  \and M. Rajesh Kannan\thanks{Department of Mathematics, Indian Institute of Technology Kharagpur, Kharagpur, India. Email:  rajeshkannan1.m@gmail.com,rajeshkannan@maths.iitkgp.ac.in } \and  K.C. Sivakumar\thanks{Department of Mathematics, Indian Institute of Technology Madras, Chennai, India. Email: kcskumar@iitm.ac.in}
}
\date{\today}
\begin{document}
\maketitle

\begin{abstract}
Let $H$ be a real Hilbert space. In this short note, using some of the properties of bounded linear operators with closed range defined on $H$, certain bounds for a specific convex subset of the solution set of infinite linear complementarity problems, are established. 
\end{abstract}

{\bf AMS Subject Classification(2010):} {47A99, 90C48.}

{\bf Keywords.} {Linear complementarity problem, Hilbert space, Closed range operator, Positive semidefinite operator.}

\newpage
\section{Introduction}\label{notation}
Let $H$ be a Hilbert space over the real field $\mathbb{R}$. Let $\mathcal{B}(H)$ denote the set of bounded linear operators on the Hilbert space $H$. A subset $K$ of a Hilbert space $H$ is said to be a cone, if $x, y \in K$ and $\lambda \geq 0$ imply that $x+ y \in K$, $\lambda x \in K$ and $K \cap (-K) = \{0\}.$ Let $H^{'}$ denote the space of all continuous linear functionals on $H$. The dual cone $K^*$ of a cone $K$ is defined as follows: $K^{*}=\{f \in H^{'} : f(x) \geq 0 , \forall x \in K \}$. Let $H_+$ be a cone in $H$. For a given vector $q \in H$ and an operator $T \in \cal{B}$ $(H)$ the linear complementarity problem, written as $LCP(T,q)$, is to find a vector $z \in H_+$ such that $Tz + q \in H_+^*$ and $\langle z, Tz+q \rangle = 0 $. For the case $H=\mathbb{R}^n$ and $H_+ =\mathbb{R}_+^n$ (entry-wise nonnegative vectors in $\mathbb{R}^n$), this problem is well studied \cite{cot-pan-sto}. The linear programming problem in $\mathbb{R}^n$ is defined as follows:  For a given vector $p \in \mathbb{R}^n$, find a vector  $x \in \mathbb{R}_+^n$ which minimizes $p^Tx$ subject to $Tx + q \in \mathbb{R}_+^n$, where $T$ is an $n \times n$ real matrix, $q$ is a vector in $\mathbb{R}^n$ and $p^T$ denotes the transpose of the vector $p$. In \cite{manga1}, the author proved that, under certain assumptions, each solution of linear programming problem is also a solution of the linear complementarity problem. In \cite{cry-demp}, the authors extended the results of \cite{manga1} to infinite dimensional Hilbert spaces. Also linear complementarity problems are closely related to the variational inequality problems. In \cite{cry-demp}, the authors established  that equivalence linear complementarity problem and variational inequality problem. In \cite{isac-numerical}, the author studied the boundedness of solution set of  linear complementarity problems over infinite dimensional Hilbert spaces. One of the main objectives of this article is to study some of the properties of closed range operators. Using these properties, we shall extend some of the bounds of the solution set of linear complementarity problems established in   \cite{isac-numerical}. Also, we provide an alternate simple proof for one of the main results (Theorem $3.1$) of  \cite{isac-numerical}.

This article is organized as follows: In section \ref{ndp}, we collect some of the known results. In section \ref{prop-clos}, we establish some of the properties of closed range operators. In Theorem \ref{posi-closed} and Theorem \ref{posi-closed-conv}, we derive an equivalent condition for an operator to be a closed range positive semidefinite operator.  In section \ref{bound_lcp}, we derive bounds for a convex subset of the solution set of linear complementarity problems.

\section{Notation, Definitions and Preliminary Results}\label{ndp}
For an operator $T \in \mathcal{B}(H)$,  let $T^{*}, R(T)$ and $N(T)$ denote the adjoint, range space and null space of $T$, respectively.  The \emph{Moore-Penrose inverse} of an operator $T  \in \mathcal{B}(H)$ is the unique operator, if it exists, $S \in \mathcal{B}(H)$ satisfying the following conditions: $(1)~ T = TST$,  $(2)~S = STS$, $(3)~(TS)^{*} = TS$ and $(4)~(ST)^{*} = ST$, and is denoted by $T^{\dagger}$.  For a subset $M$ of  $H$, $\overline{M}$ denotes the topological closure of $M$.  An operator $T \in  \mathcal{B}(H)$ is said be \emph{closed range operator}, if $R(T)$ is closed. It is well known that, an operator  $T \in \mathcal{B}(H)$ has Moore-Penrose inverse $S \in \mathcal{B}(H)$ if and only if $R(T)$ is closed \cite{benis, kato}. A self-adjoint operator $T  \in \mathcal{B}(H)$ is said to be \emph{positive semidefinite} if for all $x \in H$, $\langle Tx, x \rangle \geq 0$, and $T$ is said to be \emph{positive definite} if $\langle Tx , x \rangle > 0$ for all nonzero $x \in H$ \cite{isac-numerical}. The positive semidefinite operators are known as positive operators in the literature \cite{reed-simon}. If $T$ is a positive semidefinite operator, then there exist a unique  positive semidefinite operator $S$ such that $S^2 = T$ \cite[Lemma 3.2]{bott}. The operator $S$  is called the positive square root of the operator $T$ and is denoted by $T^\frac{1}{2}.$

If $T \in \mathcal{B}(H)$ and $R(T)$ is closed, then the following holds: $(a)~ R(T^*) = R(T^\dagger)$, $(b)~TT^\dagger y = y$ for all $y \in R(T)$ and $(c)~TT^\dagger = T^\dagger T$, whenever $R(T) = R(T^*)$. Further, if $T$ is positive semidefinite, then  $R(T) = R(T^\frac{1}{2}) , N(T) = N(T^\frac{1}{2})$.

Let $X$ be a real linear space. Then $X$ is called a {\it partially ordered vector space} if there is a partial order $"\leq"$ defined on $X$ such that the following compatibility conditions are satisfied: $(i)~ x \leq y \Longrightarrow x + z \leq y + z$ for all $z \in X$ and $(ii)~ x \leq y \Longrightarrow \alpha x \leq \alpha y$ for all $\alpha \geq 0$. A subset $X_+$ of a real linear space $X$ is said to be a {\it cone} if, $X_+ + X_+ \subseteq X_+,$  $\alpha X_+ \subseteq X_+ $ for all $ \alpha \geq 0,$  $X_+ \cap {-X_+} = \{0\}$ and $X_+ \neq \{0\}$. A vector $x \in X$ is said to be {\it nonnegative}, if $x \in X_+$. This is denoted by $x \geq 0$. We define  $x \leq y$ if and only if $y-x \in X_+$. Then $"\leq"$ is a partial order (induced by $X_+$) on $X$. Conversely, if $X$ is a partially ordered normed linear space with the partial order $"\leq"$, then the set $X_
+ =\{x \in X : x \geq 0\}$ is a cone, and it is called the {\it positive cone} of $X$. By a {\it partial ordered  real normed linear space} $X$ we mean a real normed linear space $X$ together with a {\it closed} positive cone $X_+$. Let $X'$ denote the space of all continuous linear functionals on $X$. The dual cone $X_+^*$ of $X_+$, is defined as follows: $X_+^{*}=\{f \in X' : f(x) \geq 0 , \forall x \in X_+ \}$.

A partially ordered real normed linear space which is also a Banach space is called a {\it partially ordered Banach space}. A partially ordered real normed linear space which is also a Hilbert space is called a {\it partially ordered Hilbert space}.
A cone $X_+$ on a real normed linear space $X$ is said to be solid if $\inte(X_+) \neq \emptyset $, where $\inte(X_+)$ denotes the set of all interior points of $X_+$. If $X$ is a Hilbert space, then a cone $X_+$ is said to be self-dual, if $X_+ = X_+^*$.

\section{Properties of closed range operators}\label{prop-clos}
In this section we study some of the properties of closed range operators.
\begin{defn}
For an operator $T \in \mathcal{B}(H)$ , define $M(T) = \sup \{\langle Tx, x \rangle : x \in H , ||x|| = 1\},$ $m(T) = \inf \{\langle Tx, x \rangle : x \in H , ||x|| = 1\}$ and $m_r(T) = \inf \{\langle Tx, x \rangle : x \in R(T^*) , ||x|| = 1\}.$
\end{defn}

\begin{pro}
Let $T  \in \mathcal{B}(H)$. Then the following statements hold:
\begin{enumerate}
\item[(i)] An operator $T$ is self adjoint if and only if $T^\dagger$ is self adjoint,
\item[(ii)]  For an operator $T$,  $m(T) \geq 0 $ if and only if $T$ is positive semidefinite,
\item[(iii)] A closed range operator $T$ is  self-adjoint positive semidefinite  if and only if $T^\dagger$ is self-adjoint  positive semidefinite.
\item[(iv)] For a closed range operator $T$, $m_r(T) > 0$ if and only if $m_r(T^{\dagger})>0$.
\end{enumerate}
\end{pro}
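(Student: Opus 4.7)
My plan is to treat the four parts in sequence, with (i)--(ii) being short observations and (iii)--(iv) reducing to a common change-of-variables argument.

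For part (i), I would first establish the identity $(T^*)^\dagger=(T^\dagger)^*$ by taking adjoints of the four Moore--Penrose equations satisfied by $T^\dagger$; the resulting system is precisely the defining system for the Moore--Penrose inverse of $T^*$, so uniqueness forces $(T^*)^\dagger=(T^\dagger)^*$. Then $T=T^*$ gives $T^\dagger=(T^*)^\dagger=(T^\dagger)^*$, and the converse follows from the same identity together with $(T^\dagger)^\dagger=T$, which is valid since $R(T^\dagger)=R(T^*)$ is closed. Part (ii) is just homogeneity: for $x\neq 0$, $\langle Tx,x\rangle=\|x\|^2\langle T(x/\|x\|),x/\|x\|\rangle$, so $m(T)\geq 0$ is equivalent to $\langle Tx,x\rangle\geq 0$ for every $x\in H$.

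For part (iii), I would combine (i) with the following computation. Assume $T$ is a closed range self-adjoint positive semidefinite operator and let $y\in H$. Decompose $y=y_1+y_2$ with $y_1\in R(T)$ and $y_2\in R(T)^\perp=N(T^*)$. A short argument shows $N(T^\dagger)=R(T)^\perp$: since $TT^\dagger$ is the orthogonal projection onto $R(T)$, one gets $T^\dagger y_2\in N(T)$, while simultaneously $T^\dagger y_2\in R(T^\dagger)=R(T^*)=N(T)^\perp$, forcing $T^\dagger y_2=0$. Hence $T^\dagger y=T^\dagger y_1$, and setting $x:=T^\dagger y_1\in R(T^*)=R(T)$ (using self-adjointness) gives $Tx=TT^\dagger y_1=y_1$, so $\langle T^\dagger y,y\rangle=\langle x,y_1\rangle+\langle x,y_2\rangle=\langle x,Tx\rangle\geq 0$ (the second term vanishing by orthogonality, the first being nonnegative by hypothesis on $T$). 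Combined with (i), this proves the forward direction; the converse uses $(T^\dagger)^\dagger=T$.

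For part (iv), I would use the same change of variables while tracking norm bounds. First, part (i) and the identity $R(S^*)=R(S^\dagger)$ applied to $S=T^*$ give $R((T^\dagger)^*)=R((T^*)^\dagger)=R(T)$, so $m_r(T^\dagger)=\inf\{\langle T^\dagger y,y\rangle:y\in R(T),\;\|y\|=1\}$. For such $y$, set $x:=T^\dagger y\in R(T^*)$; then $Tx=TT^\dagger y=y$ and $\langle T^\dagger y,y\rangle=\langle x,Tx\rangle$. The key quantitative estimate is $\|x\|\geq\|y\|/\|T\|=1/\|T\|$, which together with $\langle x,Tx\rangle\geq m_r(T)\|x\|^2$ (obtained by applying the definition of $m_r(T)$ to the unit vector $x/\|x\|\in R(T^*)$) yields $m_r(T^\dagger)\geq m_r(T)/\|T\|^2>0$. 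The converse again uses $(T^\dagger)^\dagger=T$ together with $\|T^\dagger\|<\infty$. The main obstacle I expect is the book-keeping in part (iv): the natural bijection between $R(T^*)$ and $R(T)$ is $x\mapsto Tx$, which does \emph{not} preserve norms, so care is required to translate the infimum over the unit sphere of $R(T^*)$ into the infimum over the unit sphere of $R(T)$ via the quantitative bound above.
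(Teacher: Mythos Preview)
Your proposal is correct. On parts (i) and (ii) you and the paper are aligned; the paper also treats these as routine verifications.

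For (iii), your orthogonal decomposition argument works, but the paper takes a shorter route: using the Moore--Penrose relation $T^\dagger = T^\dagger T T^\dagger$ together with the self-adjointness of $T^\dagger$ established in (i), it writes in one stroke
\[
\langle T^\dagger x, x\rangle \;=\; \langle T^\dagger T T^\dagger x, x\rangle \;=\; \langle T(T^\dagger x), T^\dagger x\rangle \;\geq\; 0,
\]
with no need to split $x$ along $R(T)\oplus R(T)^\perp$. This buys brevity; your approach buys the explicit identification $\langle T^\dagger y,y\rangle=\langle Tx,x\rangle$ with $x=T^\dagger y$, which you then reuse in (iv).

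For (iv), both your argument and the paper's rest on the same change of variables $x=T^\dagger y$ (so $y\in R(T)$, $x\in R(T^*)$, $Tx=y$) and the resulting identity $\langle Tx,x\rangle=\langle T^\dagger y,y\rangle$. The paper stops at this identity and asserts the equivalence directly. Your version is more careful: as you correctly anticipate, the bijection $R(T^*)\leftrightarrow R(T)$ does not preserve norms, so the bare identity does not automatically transfer positivity of one infimum over a unit sphere to the other. Your quantitative estimate $m_r(T^\dagger)\geq m_r(T)/\|T\|^2$, obtained via $\|x\|\geq 1/\|T\|$ and homogeneity, is precisely what is needed to make that step rigorous; the converse then follows symmetrically from $(T^\dagger)^\dagger=T$ as you indicate.
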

\begin{proof}
Proofs of part $(i)$ and part $(ii)$ are easy to verify.

\textbf{$(iii)$} Let $T$  be a self adjoint positive semidefinite operator with  closed range. Then, $T^{\dagger}$ exists, and $T^\dagger$ is self adjoint. Let $x \in H$.  Then, $\langle T^\dagger x, x \rangle =  \langle T^\dagger T T^\dagger x , x \rangle=\langle T T^\dagger x , T^\dagger x \rangle \geq0$, since $T$ is positive semidefinite. Thus $T^\dagger$ is positive semidefinite. The converse can be proved in a similar way.

\textbf{$(iv)$} Let $x \in R(T^{*})$, then $x = T^{\dagger}y$ for some $y \in R(T)$. Now,  $\langle  Tx , x \rangle = \langle TT^\dagger y, T^\dagger y\rangle = \langle y, T^\dagger y\rangle$. Since,  $H$  is a Hilbert space over the field of real numbers, we have $ \langle y, T^\dagger y\rangle=  \langle T^\dagger y, y\rangle$. Thus, $\langle  Tx , x \rangle  =  \langle T^\dagger y, y\rangle$, for some $y \in R(T)$, and hence we have $m_r(T)>0$ if and only if $m_r(T^\dagger) >0$.

\end{proof}

The following lemma will be useful in the proof of the subsequent result.

\begin{lemma}\label{mrT-closure}
Let $T \in \mathcal{B}(H)$ be a self-adjoint operator. Then $m_r(T) = \inf \{\langle Tx, x \rangle : x \in \overline{R(T)} , ||x|| = 1\}.$
\end{lemma}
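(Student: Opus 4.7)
The plan is to observe that, because $T$ is self-adjoint, $T^{*}=T$, so $R(T^{*})=R(T)$ and the definition of $m_r(T)$ rewrites as
$$m_r(T) = \inf\{\langle Tx, x\rangle : x \in R(T),\ \|x\|=1\}.$$
The statement then reduces to showing that passing from $R(T)$ to its closure $\overline{R(T)}$ does not change the infimum of the continuous quadratic form $x \mapsto \langle Tx,x\rangle$ on the unit sphere.

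One inequality is immediate: since $R(T)\subseteq \overline{R(T)}$, the infimum over the larger set is at most the infimum over the smaller one, i.e.\ $\inf\{\langle Tx,x\rangle : x\in\overline{R(T)},\ \|x\|=1\} \le m_r(T)$. For the reverse inequality, I would take any $x\in\overline{R(T)}$ with $\|x\|=1$ and pick a sequence $x_n\in R(T)$ with $x_n\to x$; then for $n$ large, $\|x_n\|>0$, and the normalized vectors $y_n := x_n/\|x_n\|\in R(T)$ satisfy $\|y_n\|=1$ and $y_n\to x$. By boundedness of $T$ and continuity of the inner product,
$$\langle T y_n, y_n\rangle \longrightarrow \langle Tx,x\rangle.$$
Since each $\langle Ty_n,y_n\rangle \ge m_r(T)$, we conclude $\langle Tx,x\rangle \ge m_r(T)$; taking the infimum over all such $x$ yields the reverse inequality.

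There is no real obstacle here; the only mild subtlety is the degenerate case when $R(T)=\{0\}$, in which case $\overline{R(T)}=\{0\}$ as well, the unit sphere in both sets is empty, and both infima equal $+\infty$ by convention, so the identity holds trivially. In the nontrivial case the argument above, which uses only the self-adjointness hypothesis (via $R(T^*)=R(T)$) together with the continuity of $\langle T\cdot,\cdot\rangle$ on $H$, gives the claim.
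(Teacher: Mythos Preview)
Your proof is correct and follows essentially the same approach as the paper: both arguments approximate a unit vector $x\in\overline{R(T)}$ by a sequence $x_n\in R(T)$, normalize it, and use continuity of $x\mapsto\langle Tx,x\rangle$ to conclude $\langle Tx,x\rangle\ge m_r(T)$. Your write-up is in fact slightly more complete, since you explicitly note the trivial inequality from $R(T)\subseteq\overline{R(T)}$ and handle the degenerate case $R(T)=\{0\}$, both of which the paper leaves implicit.
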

\begin{proof}
 Let $x \in \overline{R(T)}$ and $||x||=1$. Then, there exists a sequence of nonzero vectors $\{x_n\}$ in $R(T)$ such that $x_n$ converges to $x$. Now, since $||x|| =1 $, $x_n$ converges to $x$ implies that $\frac{x_n}{||x_n||}$ converges to $x$. So, without loss of generality, we can assume that $||x_n|| = 1$  for all $n$. Now, $|\langle Tx_n, x_n \rangle  - \langle Tx, x \rangle|=  |\langle  Tx_n, (x_n-x)\rangle + \langle T(x_n-x), x \rangle |\leq ||T|| ||x_n||||x_n-x|| + ||T||||x_n-x||||x||.$ Thus, $x_n$ converges to $x$ implies $\langle Tx_n, x_n \rangle$  converges to  $\langle Tx, x \rangle$, and hence $\langle Tx, x \rangle \geq m_r(T)$.
 \end{proof}

In the next theorem, we establish a sufficient condition for the positive semidefiniteness of the operator $T$ in terms of $m_r(T)$.

\begin{theorem}\label{posi-closed}
Let $T \in \mathcal{B}(H)$ be a self-adjoint operator. If  $m_r(T) > 0$, then $T$ is positive semidefinite and $R(T)$ is closed.
\end{theorem}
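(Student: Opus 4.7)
The plan is to exploit the self-adjointness of $T$ to reduce everything to an analysis on the closed subspace $\overline{R(T)}$, where the hypothesis $m_r(T)>0$ can be upgraded via Lemma \ref{mrT-closure}.

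First I would record the orthogonal decomposition $H = \overline{R(T)} \oplus N(T)$, which holds because $T$ is self-adjoint (so $N(T) = R(T^*)^\perp = R(T)^\perp$). By Lemma \ref{mrT-closure}, the hypothesis gives $\langle Tx,x\rangle \geq m_r(T)\|x\|^2$ for every $x \in \overline{R(T)}$ (by homogeneity from the unit-sphere statement). For positive semidefiniteness, I write an arbitrary $x \in H$ as $x = x_1 + x_2$ with $x_1 \in \overline{R(T)}$ and $x_2 \in N(T)$. Then $Tx = Tx_1 \in R(T) \subseteq \overline{R(T)}$, which is orthogonal to $x_2$, so
\[
\langle Tx,x\rangle = \langle Tx_1,x_1\rangle + \langle Tx_1,x_2\rangle = \langle Tx_1,x_1\rangle \geq m_r(T)\|x_1\|^2 \geq 0.
\]
This settles the first conclusion.

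For closedness of $R(T)$, the idea is to show that $T$ is bounded below on $\overline{R(T)}$. Given $x \in \overline{R(T)}$, Cauchy--Schwarz and the bound above yield
\[
\|Tx\|\,\|x\| \geq \langle Tx,x\rangle \geq m_r(T)\|x\|^2,
\]
so $\|Tx\| \geq m_r(T)\|x\|$ on $\overline{R(T)}$. Since $\overline{R(T)}$ is a Hilbert space and $T$ restricted to it is a bounded linear map that is bounded below, its image $T(\overline{R(T)})$ is closed. Finally, because $N(T)$ contributes nothing to the range, $R(T) = T\bigl(\overline{R(T)} \oplus N(T)\bigr) = T(\overline{R(T)})$, which is therefore closed.

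The main obstacle is the closed-range claim; the positive-semidefinite part is a direct consequence of the orthogonal decomposition. The key trick is to observe that the quantitative lower bound $\langle Tx,x\rangle \geq m_r(T)\|x\|^2$ on $\overline{R(T)}$ upgrades to the \emph{operator} lower bound $\|Tx\| \geq m_r(T)\|x\|$ via Cauchy--Schwarz, which is exactly what is needed to invoke the standard Banach-space fact that a bounded-below operator on a complete space has closed range. Equivalently, if $Tx_n$ is Cauchy in $H$ with $x_n \in \overline{R(T)}$, then $\{x_n\}$ is Cauchy, its limit $x_\infty$ lies in $\overline{R(T)}$, and $Tx_n \to Tx_\infty \in R(T)$.
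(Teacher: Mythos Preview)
Your proof is correct and follows essentially the same route as the paper: both use the orthogonal decomposition $H=\overline{R(T)}\oplus N(T)$ together with Lemma~\ref{mrT-closure} for positive semidefiniteness, and both derive the lower bound $\|Tx\|\geq m_r(T)\|x\|$ on $\overline{R(T)}$ (via Cauchy--Schwarz) to force any Cauchy image sequence to come from a Cauchy preimage sequence. The only cosmetic difference is that you package the second step as ``bounded below implies closed range'' before spelling out the Cauchy-sequence argument, whereas the paper writes the Cauchy-sequence computation directly.
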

\begin{proof}
 By Lemma \ref{mrT-closure}, $\langle Tx, x \rangle \geq 0$ for all $x \in \overline{R(T)}$. Now, we shall prove that  $T$ is positive semidefinite. Let $x \in H$, then $x = x_1 + x_2$, where $x_1 \in N(T)$ and $x_2 \in N(T)^\perp =  \overline{R(T)}$. Now,  $\langle Tx, x \rangle =  \langle T(x_1+x_2), (x_1+x_2) \rangle  = \langle Tx_2, x_2 \rangle \geq 0$. Thus $T$ is positive semidefinite.
To complete the proof, let us show that $R(T)$ is closed. Assume that the sequence $\{Tx_n\}$  converges to $y$. We can assume that, the elements of the sequence $\{x_n\}$  are distinct and belong to $\overline{R(T)}$. Then, by Lemma \ref{mrT-closure}, we have $$m_r(T) \leq \frac{\langle Tx_n - T x_m, x_n-x_m \rangle}{||x_n - x_m||^2} \leq \frac{||Tx_n - T x_m||~||x_n-x_m||}{||x_n - x_m||^2} $$ so that  $$m_r(T) ||x_n - x_m|| \leq ||Tx_n - Tx_m||.$$ Since $\{Tx_n\}$ is a cauchy sequence, it follows that $\{x_n\}$ is also a cauchy  sequence and hence $\{x_n \}$ converges to some  $x \in H$. Thus $Tx_n$ converges to $Tx$, and hence $Tx=y$. Thus $R(T)$ is closed.
\end{proof}

Next, we show that the converse of Theorem \ref{posi-closed} is true. 

\begin{theorem}\label{posi-closed-conv}
Let $T \in \mathcal{B}(H)$ be a self-adjoint operator. If $T$ is positive semidefinite and $R(T)$ is closed, then  $m_r(T) > 0$.
\end{theorem}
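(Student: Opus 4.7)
The plan is to reduce the problem to the positive square root $T^{1/2}$. Since $T$ is self-adjoint, $R(T^{\ast})=R(T)$, and since $T$ is positive semidefinite we may write $\langle Tx,x\rangle = \langle T^{1/2}x,T^{1/2}x\rangle = \|T^{1/2}x\|^{2}$. So the claim $m_{r}(T)>0$ is equivalent to producing a constant $c>0$ such that $\|T^{1/2}x\|\ge c$ for every unit vector $x\in R(T^{\ast})=R(T)$.

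Next I would exploit closed range. The paper has already recorded that if $T$ is positive semidefinite then $R(T)=R(T^{1/2})$ and $N(T)=N(T^{1/2})$. Since $R(T)$ is closed by hypothesis, so is $R(T^{1/2})$. Hence $T^{1/2}$ itself is a closed range self-adjoint operator, so its Moore-Penrose inverse $(T^{1/2})^{\dagger}$ exists and lies in $\mathcal{B}(H)$, and it satisfies $(T^{1/2})^{\dagger}T^{1/2}y=y$ for every $y\in R((T^{1/2})^{\ast})=R(T^{1/2})=R(T)$.

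Now I apply this to a unit vector $x\in R(T^{\ast})=R(T)=R(T^{1/2})$. Then
\[
\|x\| = \|(T^{1/2})^{\dagger}T^{1/2}x\| \le \|(T^{1/2})^{\dagger}\|\,\|T^{1/2}x\|,
\]
so $\|T^{1/2}x\|\ge 1/\|(T^{1/2})^{\dagger}\|$. Squaring gives $\langle Tx,x\rangle \ge 1/\|(T^{1/2})^{\dagger}\|^{2}$, and taking the infimum over such $x$ yields $m_{r}(T)\ge 1/\|(T^{1/2})^{\dagger}\|^{2}>0$.

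The only place that requires some care is verifying that $T^{1/2}$ inherits closed range from $T$ and then legitimately invoking the Moore-Penrose inverse of $T^{1/2}$; both steps are immediate from the results already collected in Section~\ref{ndp}, so I do not expect a real obstacle. (An equivalent but slightly less clean alternative would be to argue directly that the restriction $T^{1/2}\!\mid_{N(T^{1/2})^{\perp}}$ is a bounded bijection onto $R(T^{1/2})$ and apply the bounded inverse theorem, but routing through $(T^{1/2})^{\dagger}$ matches the notation already in use.)
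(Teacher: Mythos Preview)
Your proof is correct, and the underlying idea---rewrite $\langle Tx,x\rangle=\|T^{1/2}x\|^{2}$ and exploit the bounded Moore--Penrose inverse that closed range provides---is the same as the paper's. The execution differs slightly: the paper argues by contradiction using $T^{\dagger}$ rather than $(T^{1/2})^{\dagger}$. Assuming $m_{r}(T)=0$, it picks unit vectors $x_{n}\in R(T)$ with $\langle Tx_{n},x_{n}\rangle\to 0$, deduces $T^{1/2}x_{n}\to 0$ and hence $Tx_{n}\to 0$, and then concludes $x_{n}=T^{\dagger}Tx_{n}\to 0$, contradicting $\|x_{n}\|=1$. Your direct argument via $(T^{1/2})^{\dagger}$ has the advantage of yielding the explicit bound $m_{r}(T)\ge \|(T^{1/2})^{\dagger}\|^{-2}$; the paper's route avoids the (easy) step of verifying that $T^{1/2}$ inherits closed range.
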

\begin{proof}
Suppose that $m_r(T) =0$. Then, there exists a sequence $\{x_n\}$ in $R(T)$ such that $||x_n|| =1$ and $\langle Tx_n, x_n \rangle  \rightarrow 0$. We have $\langle Tx_n , x_n \rangle =  \langle T^{\frac{1}{2}}x_n , T^{\frac{1}{2}}x_n \rangle = ||T^{\frac{1}{2}}x_n||$ and so $T^{\frac{1}{2}}x_n \rightarrow 0$ which, in turn implies that $Tx_n \rightarrow 0$. Since $T$ has closed range, $T^\dagger$ is bounded and hence $T^\dagger T x_n \rightarrow 0$. But $T^\dagger T x_n = x_n$ for all $n$. So $x_n$ converges to $0$, which is not possible. Thus $m_r(T) >0$.
\end{proof}

Next, let us establish a property of closed range operators with $m_r(T)>0$.

\begin{theorem}\label{norm_rel}
Let $T \in \mathcal{B}(H)$ be a self adjoint operator with $m_r(T) >0 $. If
\begin{itemize}
\item[(a)] $R_1 = \sup \{\langle Tx, x \rangle : x \in R(T)$ and $||x|| =1 \},$
\item[(b)] $R_2 = \sup \{\langle Tx, x \rangle : x \in R(T)$ and $||x|| \leq 1 \},$
\item[(c)] $R_3 = \sup \{\langle Tx, x \rangle : ||x|| =1 \},$ and
\item[(d)] $R_4 = \sup \{\langle Tx, x \rangle : ||x|| \leq 1 \}, $
\end{itemize}
then $R_1 =R_2 =R_3 =R_4$.
\end{theorem}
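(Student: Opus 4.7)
The plan is to establish the chain of inequalities $R_1 \le R_2 \le R_4$ and $R_1 \le R_3 \le R_4$ by inspection of the defining feasible sets, and then to close the loop by proving $R_4 \le R_3$, $R_2 \le R_1$, and $R_3 \le R_1$ using the extra structure granted by the hypothesis $m_r(T)>0$. Note that by Theorem~\ref{posi-closed}, the hypothesis $m_r(T)>0$ forces $T$ to be positive semidefinite with closed range; this is the workhorse for what follows.

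First I would handle $R_4\le R_3$ and $R_2\le R_1$ by a standard scaling argument. Given any nonzero $x$ with $\|x\|\le 1$, set $y = x/\|x\|$; since $T$ is positive semidefinite, $\langle Tx,x\rangle = \|x\|^2\langle Ty,y\rangle \le \langle Ty,y\rangle$ because $\langle Ty,y\rangle\ge 0$ and $\|x\|^2\le 1$. Applied with or without the restriction $x\in R(T)$ (which is preserved under scaling), this gives both equalities $R_2=R_1$ and $R_4=R_3$; the edge case $x=0$ contributes only the value $0$ to the suprema.

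The substantive step is $R_3\le R_1$. Here I would use that $T$ is self-adjoint with closed range, so the orthogonal decomposition $H = N(T)\oplus N(T)^\perp = N(T)\oplus \overline{R(T)} = N(T)\oplus R(T)$ is available (closedness of $R(T)$ lets us drop the closure). For $x\in H$ with $\|x\|=1$, write $x=x_1+x_2$ with $x_1\in N(T)$ and $x_2\in R(T)$. Then $Tx=Tx_2$, and since $Tx_2\in R(T)\perp N(T)\ni x_1$, we get
\[
\langle Tx,x\rangle = \langle Tx_2,x_1\rangle + \langle Tx_2,x_2\rangle = \langle Tx_2,x_2\rangle.
\]
By the Pythagorean identity $\|x_2\|\le \|x\|=1$ and $x_2\in R(T)$, so $\langle Tx_2,x_2\rangle\le R_2 = R_1$. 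Taking the supremum over $x$ yields $R_3\le R_1$, completing the chain.

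The only mild obstacle is bookkeeping: one must remember to exclude or separately dispatch the zero vector in the scaling step, and to invoke both positive semidefiniteness (Theorem~\ref{posi-closed}) and closedness of $R(T)$ at the right moments, the latter so that $N(T)^\perp$ genuinely equals $R(T)$ rather than merely $\overline{R(T)}$. Once these are in place, the argument is a short assembly.
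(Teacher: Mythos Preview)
Your proposal is correct and follows essentially the same route as the paper: both arguments use positive semidefiniteness (via Theorem~\ref{posi-closed}) to get $R_1=R_2$ and $R_3=R_4$ by scaling, and then use the orthogonal decomposition $H=N(T)\oplus R(T)$ to reduce the unrestricted supremum to the one over $R(T)$. The only cosmetic difference is that the paper carries out the decomposition step on the closed unit ball (showing $R_4\le R_2$) whereas you do it on the unit sphere (showing $R_3\le R_1$, going through $R_2$).
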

\begin{proof}
It is easy to verify that $R_1 = R_2$ and $R_3 =R_4$.To complete the proof, let us prove $R_2 = R_4$. From the definition, it is clear that $R_4 \geq R_2$. Let $x \in H$ such that $||x||\leq 1.$ Then, $x = x_1 + x_2$ with $x_1 \in N(T)^\perp = R(T)$, $x_2 \in N(T)$ such that $||x_1|| \leq 1$ and $||x_2|| \leq 1$. Thus, for any  $x \in H$, we have $\langle Tx , x \rangle = \langle Tx_1,  x_1\rangle$ for some $x_1 \in R(T)$ with $||x_1|| \leq 1$. Thus, we have $R_4 = R_2$.
\end{proof}

In the next theorem, we derive relationships between $M(T^\dagger)$ and $m_r(T)$, and $M(T)$ and $m_r(T^\dagger)$

\begin{theorem}\label{mMinv}
Let $T \in \mathcal{B}(H)$ be a self adjoint operator such that $m_r(T)>0$. Then the following holds:
 \begin{enumerate}
 \item $M(T^\dagger) = [m_r(T)]^{-1}$, and
 \item $m_r(T^\dagger) = [M(T)]^{-1}.$
\end{enumerate}
\end{theorem}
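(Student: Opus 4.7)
The plan is to prove Part (1) directly and then deduce Part (2) via the involution $(T^\dagger)^\dagger = T$. First, since $m_r(T) > 0$, Theorem \ref{posi-closed} forces $T$ to be positive semidefinite with closed range, so $T^\dagger$ exists; by parts (iii) and (iv) of the earlier proposition, $T^\dagger$ is itself self-adjoint positive semidefinite with $m_r(T^\dagger) > 0$, and $R(T^\dagger) = R(T^*) = R(T)$.

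For the upper bound in Part (1), fix $y \in R(T)$ with $\|y\| = 1$ and set $z := T^\dagger y \in R(T^*)$. Since $y \in R(T)$, one has $Tz = TT^\dagger y = y$, so $\langle T^\dagger y, y\rangle = \langle z, Tz\rangle$. If $z \neq 0$, the definition of $m_r(T)$ applied to $z/\|z\|$ yields $\langle Tz, z\rangle \geq m_r(T)\,\|z\|^2$, while Cauchy--Schwarz gives $\langle Tz, z\rangle = \langle z, y\rangle \leq \|z\|$. Combining these produces $\|z\| \leq 1/m_r(T)$, hence $\langle T^\dagger y, y\rangle \leq 1/m_r(T)$. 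Because $m_r(T^\dagger) > 0$, Theorem \ref{norm_rel} applied to $T^\dagger$ shows that $M(T^\dagger)$ is the supremum of $\langle T^\dagger y, y\rangle$ over the unit sphere of $R(T^\dagger) = R(T)$, giving $M(T^\dagger) \leq 1/m_r(T)$.

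For the matching lower bound, I would pass to the positive square root $S = T^{1/2}$, which by the preliminaries is self-adjoint positive semidefinite with $R(S) = R(T)$ and hence closed range. A short check of the four Moore--Penrose axioms, using $SS^\dagger = S^\dagger S$ (available since $R(S) = R(S^*)$), shows that $(S^\dagger)^2 = T^\dagger$. Consequently $\langle Tx, x\rangle = \|Sx\|^2$ and $\langle T^\dagger y, y\rangle = \|S^\dagger y\|^2$. Choose $v_n \in R(T^*)$ with $\|v_n\| = 1$ and $\|Sv_n\|^2 = \langle Tv_n, v_n\rangle \to m_r(T)$, and set $w_n := Sv_n \in R(S) = R(T)$. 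Since $v_n \in R(T^*) = R(S^*)$, the projection identity $S^\dagger S v_n = v_n$ gives $\|S^\dagger w_n\|^2 = \|v_n\|^2 = 1$, so
\[
\frac{\langle T^\dagger w_n, w_n\rangle}{\|w_n\|^2} \;=\; \frac{1}{\|Sv_n\|^2} \;\longrightarrow\; \frac{1}{m_r(T)},
\]
which yields $M(T^\dagger) \geq 1/m_r(T)$ and completes Part (1).

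Part (2) then follows by applying Part (1) with $T$ replaced by $T^\dagger$: this is legitimate because $T^\dagger$ is self-adjoint with $m_r(T^\dagger) > 0$ and closed range, and it produces $M((T^\dagger)^\dagger) = 1/m_r(T^\dagger)$. Since $(T^\dagger)^\dagger = T$ for closed range operators, we obtain $M(T) = 1/m_r(T^\dagger)$, i.e.\ $m_r(T^\dagger) = 1/M(T)$. The only delicate step I expect is justifying the identity $T^\dagger = (S^\dagger)^2$ in infinite dimensions; once that is in hand, both bounds reduce cleanly to the defining inequalities for $m_r$ together with routine Moore--Penrose manipulations.
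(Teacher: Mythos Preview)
Your argument is correct. The upper bound $M(T^\dagger)\le 1/m_r(T)$ is essentially the paper's own computation: with $z=T^\dagger y$ playing the role of the paper's $x$ (so $y=Tz$), both of you combine $m_r(T)\|z\|^2\le\langle Tz,z\rangle$ with the ordinary Cauchy--Schwarz inequality $\langle Tz,z\rangle\le\|Tz\|\,\|z\|$ and then invoke Theorem~\ref{norm_rel}. The lower bound, however, is obtained differently. The paper uses the Cauchy--Schwarz inequality for the positive semi-inner product $(u,v)\mapsto\langle Tu,v\rangle$: for $x\in R(T)$ with $\|x\|=1$ and $y=T^\dagger x$ one has $1=|\langle TT^\dagger x,x\rangle|^2\le\langle Tx,x\rangle\langle T^\dagger x,x\rangle$, whence $m_r(T)\,M(T^\dagger)\ge 1$. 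Your route via the square root $S=T^{1/2}$ and the identity $T^\dagger=(S^\dagger)^2$---which you correctly verify from the Moore--Penrose axioms using $SS^\dagger=S^\dagger S$---gives the bound constructively by exhibiting a sequence $w_n/\|w_n\|$ along which $\langle T^\dagger\,\cdot\,,\,\cdot\,\rangle$ approaches $1/m_r(T)$. The paper's argument is shorter and avoids the square-root machinery; yours actually produces near-extremal vectors. Finally, for Part~(2) the paper simply says the proof is similar, whereas your deduction from Part~(1) via $(T^\dagger)^\dagger=T$ is a cleaner way to close the loop.
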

\begin{proof}
Let $y \in R(T)$ with $||y||=1$. Then $y = Tx$, for some $x \in R(T)$. Now, $ \langle T^\dagger y, y \rangle = \langle T^\dagger T x, T x \rangle = \langle Tx, x \rangle$. Since, $m_r(T) \leq \frac{1}{||x||^2} \langle Tx, x\rangle$, we have $ \frac{1}{\langle Tx, x\rangle} \leq \frac{1}{||x||^2 m_r(T)}$ and hence $\langle Tx, x \rangle \leq \frac{\langle Tx , x \rangle^2}{||x||^2 m_r(T)} \leq \frac{||Tx||^2}{m_r(T)}$. Thus $\langle T^\dagger y, y\rangle \leq \frac{1}{m_r(T)}$, and hence $\sup\{\langle T^\dagger y , y \rangle: y \in R(T), ||y|| = 1\} \leq \frac{1}{m_r(T)}$. Now, by Theorem \ref{norm_rel}, we get $M(T^\dagger) \leq \frac{1}{m_r(T)}.$


If $x \in N(T)$ or $y \in N(T)$, then $\langle Tx, y \rangle = \langle x , Ty \rangle = 0.$ Let $x, y \in R(T)$, then, by Cauchy-Schwartz inequality for a semi inner product, we have $|\langle Tx, y \rangle|^2 \leq \langle Tx, x\rangle \langle Ty, y\rangle.$ If $y = T^\dagger x$, then $|\langle TT^\dagger x,  x \rangle|^2 \leq  \langle Tx, x\rangle \langle T^\dagger x, x\rangle$. Hence, we have $$\langle Tx, x \rangle \langle T^\dagger x, x \rangle \geq 1,$$ whenever $||x|| =1.$ Now, we have $\langle Tx, x \rangle  \geq \frac{1}{\langle T^\dagger x, x \rangle}$ and hence $\inf \{ \langle Tx, x \rangle: ||x||=1\}  \geq \inf \frac{1}{\{\langle T^\dagger x, x \rangle : ||x|| = 1\}} =  \frac{1}{\sup \{ \langle T^\dagger x, x \rangle: ||x|| = 1\}} $. Thus, we get $M(T^\dagger ) \geq \frac{1}{ m_r(T)}.$ The proof of the second assertion is similar.
\end{proof}

In the next corollary, we establish a relationship between $m_r(T)$ and norm of the operator $T^\dagger$.

\begin{cor}\label{comp-m-M}
If $T \in \mathcal{B}(H)$ is a self adjoint operator such that $m_r(T) >0 $, then $\displaystyle{||T^\dagger|| =\frac{1}{m_r(T)}}$.
\end{cor}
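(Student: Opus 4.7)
The plan is to combine Theorem \ref{posi-closed}, part (iii) of the proposition, and Theorem \ref{mMinv} with the standard Hilbert-space fact that a self-adjoint positive semidefinite operator attains its operator norm as the supremum of its quadratic form on the unit sphere.

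First, I would invoke Theorem \ref{posi-closed} to conclude that the hypothesis $m_r(T) > 0$ forces $T$ to be positive semidefinite with closed range. Consequently $T^\dagger$ exists as a bounded operator, and by part $(iii)$ of the proposition it is itself self-adjoint and positive semidefinite. This reduces the problem to computing the operator norm of an arbitrary bounded self-adjoint positive semidefinite operator $S$, namely $\|S\| = M(S) = \sup\{\langle Sx,x\rangle : \|x\|=1\}$, and then applying it to $S = T^\dagger$.

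Next I would verify the identity $\|S\| = M(S)$ for bounded self-adjoint positive semidefinite $S$. The inequality $M(S) \leq \|S\|$ is immediate from Cauchy--Schwarz: $\langle Sx,x\rangle \leq \|Sx\|\,\|x\| \leq \|S\|$ when $\|x\|=1$. For the reverse inequality I would use the semi-inner product $(x,y) \mapsto \langle Sx,y\rangle$ together with Cauchy--Schwarz to deduce $|\langle Sx,y\rangle|^2 \leq \langle Sx,x\rangle \langle Sy,y\rangle \leq M(S)^2 \|x\|^2 \|y\|^2$ for unit $x,y$, and then take the supremum over $y$ to obtain $\|Sx\| \leq M(S)\|x\|$, i.e.\ $\|S\| \leq M(S)$.

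Combining these ingredients gives $\|T^\dagger\| = M(T^\dagger)$, and Theorem \ref{mMinv} identifies $M(T^\dagger) = [m_r(T)]^{-1}$, which is the desired conclusion. I do not anticipate a real obstacle here; the only point that requires any care is the $\|S\|=M(S)$ identity, which is classical but worth recording explicitly since everything else in the corollary is a direct pointer to earlier results in this section.
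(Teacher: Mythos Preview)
Your proposal is correct and follows essentially the same line as the paper: establish $\|T^\dagger\| = M(T^\dagger)$ and then invoke the identity $M(T^\dagger) = [m_r(T)]^{-1}$ from Theorem~\ref{mMinv}. The paper's proof is terser---it simply asserts $\|T^\dagger\| = \sup\{\langle T^\dagger x,x\rangle : \|x\|=1\}$ as a known fact and then cites Theorem~\ref{norm_rel} (evidently as a pointer to the chain of results ending in Theorem~\ref{mMinv})---whereas you spell out the Cauchy--Schwarz argument for $\|S\|=M(S)$ and explicitly record that $T^\dagger$ is positive semidefinite via the proposition; this extra care is reasonable but not a different approach.
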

\begin{proof}
By the definition, $||T^\dagger|| =  \sup \{\displaystyle \langle T^\dagger x, x\rangle : x \in H, ||x|| =1 \}$.
 Thus, by Theorem  \ref{norm_rel}, we have $ ||T^\dagger||  = \displaystyle{\frac{1}{m_r(T)}}.$
\end{proof}

\section{Bounds for solution set of linear complementarity problems} \label{bound_lcp}
In this section, we establish bounds for a certain specific convex subset of the solution set of linear complementarity problems. For an operator $T \in \mathcal{B}(H)$ and  $b \in H$, the solution set of the associated linear complementarity problem, LCP$(T, b)$, is denoted by $SOL(T,b)$, is defined as the set of all solutions of the LCP$(T, b)$.

In general, the solution set of a linear complementarity problem need not be bounded. In the next theorem, we give a sufficient condition under which the solution is unbounded.

\begin{theorem}
Let $H$ be a real Hilbert space and let $K$ be a cone. Let $T \in \mathcal{B}(H)$  and $b \in H$. If  $ x \in N(T) \cap SOL(T, b)$ for some nonzero $x \in H$, then $SOL(T, b)$ is unbounded.
\end{theorem}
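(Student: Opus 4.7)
The plan is to produce an unbounded family of solutions explicitly by scaling the given vector $x$. Since $x \in N(T)$ we have $Tx = 0$, so the three defining conditions of $SOL(T,b)$ applied to $x$ collapse to: $x \in K$, $b \in K^{*}$, and $\langle x, b \rangle = 0$. These are exactly the ingredients needed to make every nonnegative scalar multiple of $x$ a solution.

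Next I would consider the ray $\{\lambda x : \lambda \geq 0\}$ and verify membership in $SOL(T,b)$ directly. First, $\lambda x \in K$ because $K$ is a cone and $\lambda \geq 0$. Second, $T(\lambda x) + b = \lambda T x + b = b$, which lies in $K^{*}$ by the observation above. Third, $\langle \lambda x,\; T(\lambda x) + b\rangle = \langle \lambda x, b\rangle = \lambda \langle x, b\rangle = 0$. So the entire ray is contained in $SOL(T,b)$.

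Finally, since $x \neq 0$, we have $\|\lambda x\| = \lambda \|x\| \to \infty$ as $\lambda \to \infty$, which shows $SOL(T,b)$ is unbounded.

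There is no real obstacle here; the only point to be careful about is ensuring the three LCP conditions are checked with the correct cones ($K$ for primal feasibility and $K^{*}$ for dual feasibility), and using $Tx = 0$ at the right place so that the complementarity condition for $\lambda x$ reduces to $\lambda \langle x, b\rangle$, which vanishes because the original $x$ already satisfies $\langle x, Tx + b\rangle = \langle x, b\rangle = 0$.
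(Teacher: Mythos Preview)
Your proof is correct and follows exactly the same idea as the paper's proof: both show that the ray $\{\lambda x : \lambda \geq 0\}$ lies in $SOL(T,b)$, which forces unboundedness since $x \neq 0$. You have simply spelled out the verification of the three LCP conditions in more detail than the paper, which merely asserts that $\alpha x \in N(T) \cap SOL(T,b)$ for all $\alpha \geq 0$.
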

\begin{proof}
Let $ x \in N(T) \cap SOL(T, b)$ and $x  \neq 0$. Then $\alpha x \in N(T) \cap SOL(T, b)$, for all $ \alpha \geq 0.$ Hence $SOL(T, b)$ is unbounded.
\end{proof}

In the next theorem we establish a bound for those solutions which do not belong to the null space of the operator $T$.

\begin{theorem}\label{bounded1}
Let $H$ be a real Hilbert space and let $K$ be a cone. Let $T \in \mathcal{B}(H)$  and $b \in H$. If $m_r(T)>0$, then the solution set $SOL(T, b)\cap R(T^*)$ of $LCP(T, b)$ is a subset of $B(0, \frac{||b||}{m_r(T)}) \cap K$.
\end{theorem}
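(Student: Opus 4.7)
The plan is to take an arbitrary $z \in SOL(T,b) \cap R(T^*)$ and directly bound $\|z\|$ using the complementarity condition together with the hypothesis $m_r(T) > 0$. The starting point is the identity $\langle z, Tz + b\rangle = 0$, which rearranges to $\langle Tz, z\rangle = -\langle z, b\rangle$. This turns the problem into estimating the left-hand side from below and the right-hand side from above.

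For the lower bound, I would exploit that $z \in R(T^*)$ and, if $z \neq 0$, write
\[
\langle Tz, z\rangle = \|z\|^{2}\left\langle T\frac{z}{\|z\|}, \frac{z}{\|z\|}\right\rangle \geq m_r(T)\,\|z\|^{2},
\]
since $z/\|z\|$ is a unit vector in $R(T^*)$ and so falls under the infimum defining $m_r(T)$. For the upper bound, Cauchy--Schwarz gives $|\langle z, b\rangle| \leq \|z\|\,\|b\|$. Combining these with the identity above yields
\[
m_r(T)\,\|z\|^{2} \leq \langle Tz, z\rangle = -\langle z, b\rangle \leq |\langle z, b\rangle| \leq \|z\|\,\|b\|,
\]
from which $\|z\| \leq \|b\|/m_r(T)$ follows after dividing by $\|z\|$ (the case $z = 0$ is trivial). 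Since $z \in SOL(T,b) \subseteq K$, we conclude $z \in B(0, \|b\|/m_r(T)) \cap K$, as required.

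I do not anticipate a serious obstacle here: the argument is essentially a one-line coercivity estimate once one observes that membership in $R(T^*)$ is exactly what activates the $m_r(T)$-lower bound. The only subtlety worth noting is that one could worry whether $\langle Tz, z\rangle \geq 0$ is needed; in fact Theorem \ref{posi-closed} already guarantees that $T$ is positive semidefinite under the hypothesis $m_r(T) > 0$, so the sign manipulation $-\langle z, b\rangle \leq |\langle z, b\rangle|$ is consistent with $\langle z, b\rangle \leq 0$ that the complementarity condition forces, but for the purpose of the bound one does not even need to track signs carefully.
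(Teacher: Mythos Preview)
Your proof is correct and follows essentially the same approach as the paper: both use the lower bound $\langle Tz,z\rangle \geq m_r(T)\|z\|^2$ for $z \in R(T^*)$ together with Cauchy--Schwarz on $\langle z,b\rangle$. The paper frames it contrapositively (if $m_r(T)\|x\| - \|b\| > 0$ then $\langle Tx+b,x\rangle > 0$, so $x$ is not a solution), whereas you start directly from the complementarity identity $\langle z,Tz+b\rangle = 0$, but the underlying computation is identical. One minor remark: your aside invoking Theorem~\ref{posi-closed} is slightly off since that result assumes $T$ self-adjoint while the present theorem does not, but as you yourself note, the sign of $\langle Tz,z\rangle$ is not actually needed for the argument.
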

\begin{proof}For $x \in H$,
we have,
\begin{align*}
\langle Tx+b, x\rangle &=  \langle Tx, x\rangle + \langle b, x\rangle,      \\
 &\geq  m_r(T)||x||^2  - ||b|| || x||, \\
 & =  (m_r(T)||x|| - ||b||) || x||.
\end{align*}
Now, if $(m_r(T)||x|| - ||b||) >0 $, then $\langle Tx+b, x\rangle >0$. Thus $x \notin SOL(T, b)$.  Hence, if $ x \in SOL(T, b)\cap R(T^*) $, then $(m_r(T)||x|| - ||b||) \leq 0$. Thus $||x|| \leq  \frac{||b||}{m_r(T)}$,  which proves the claim.
\end{proof}

In the next theorem, we derive a bound for solution of the linear complementarity problem, whenever the solution vector belongs to the range space of the operator $T$.

\begin{theorem}
Let $H$ be a real Hilbert space and $K$ be a self-dual cone in $H$. If $T\in \mathcal{B}(H)$ is a self adjoint operator such that $m_r(T) >0 $, then for every solution $x \in R(T)$ of  $LCP(T, b)$ where $b \in R(T)$, one has $||x|| \leq \frac{M(T)}{m_r(T)} ||T^{\dagger}(b)||.$
\end{theorem}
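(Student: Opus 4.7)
The plan is to start from the complementarity condition $\langle x, Tx+b\rangle = 0$, which rearranges to $\langle Tx, x\rangle = -\langle x, b\rangle$. Since $m_r(T)>0$ and $T$ is self-adjoint, Theorem \ref{posi-closed} guarantees that $T$ is positive semidefinite with closed range, so $T^\dagger \in \mathcal{B}(H)$ exists and $R(T^*)=R(T)$. In particular, for $x\in R(T)=R(T^*)$ nonzero, rescaling to unit norm yields the lower bound $\langle Tx,x\rangle \geq m_r(T)\|x\|^2$.

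Next I would exploit the hypothesis $b\in R(T)$ together with property (b) from Section \ref{ndp}, which gives $b = T T^\dagger b$. Using self-adjointness of $T$, I rewrite
\[
\langle x, b\rangle \;=\; \langle x, TT^\dagger b\rangle \;=\; \langle Tx, T^\dagger b\rangle,
\]
and then apply the Cauchy–Schwarz inequality:
\[
-\langle Tx, T^\dagger b\rangle \;\leq\; |\langle Tx, T^\dagger b\rangle| \;\leq\; \|Tx\|\,\|T^\dagger b\|.
\]
To turn $\|Tx\|$ into something involving $M(T)$, I use the standard fact that for a self-adjoint positive semidefinite operator one has $\|T\| = M(T)$ (since $\|T\|=\sup_{\|x\|=1}|\langle Tx,x\rangle|$ for self-adjoint $T$, and the absolute values may be dropped by positivity). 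Hence $\|Tx\|\leq M(T)\|x\|$.

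Chaining these inequalities produces
\[
m_r(T)\,\|x\|^2 \;\leq\; \langle Tx,x\rangle \;=\; -\langle Tx, T^\dagger b\rangle \;\leq\; M(T)\,\|x\|\,\|T^\dagger b\|.
\]
The case $x=0$ is trivial; otherwise dividing both sides by $m_r(T)\|x\|$ yields the desired bound $\|x\|\leq \frac{M(T)}{m_r(T)}\|T^\dagger b\|$. The only subtle ingredient is the identity $\|T\|=M(T)$, but this is immediate for self-adjoint positive semidefinite operators; the rest of the argument is a direct combination of the complementarity identity, the definition of $m_r(T)$ restricted to $R(T^*)=R(T)$, and the Moore–Penrose identity $TT^\dagger b = b$ on $R(T)$. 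No appeal to self-duality of $K$ beyond what is already encoded in the LCP definition is needed for the bound itself.
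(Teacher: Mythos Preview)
Your argument is correct and follows essentially the same route as the paper: both start from the complementarity identity $\langle Tx,x\rangle=-\langle x,b\rangle$, use $m_r(T)\|x\|^2\le\langle Tx,x\rangle$ on $R(T)$, insert $b=TT^\dagger b$, and finish with Cauchy--Schwarz together with $\|T\|=M(T)$. The only cosmetic difference is that the paper first records the intermediate bound $\|x\|\le\|b\|/m_r(T)$ (its Theorem~\ref{bounded1}) and then estimates $\|b\|=\|TT^\dagger b\|\le M(T)\|T^\dagger b\|$, whereas you substitute $TT^\dagger b$ before applying Cauchy--Schwarz; the two chains collapse to the same inequality.
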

\begin{proof}
Since $b\in R(T)$ and $T$ is self adjoint, we have $b=TT^\dagger b$ and $M(T)=\lvert \lvert T \rvert \rvert$. Thus $\lvert \vert b\rvert \rvert\leq \lvert \lvert T \rvert \rvert \lvert \lvert T^\dagger b \rvert \rvert$. Suppose that $x\in R(T)$ is a solution of $LCP (T,b)$. By Theorem \ref{bounded1},
\begin{align*}
\lvert \lvert x\rvert \rvert & \leq \frac{||b||}{m_r(T)},\\
&  \leq \frac{M(T)}{m_r(T)} ||T^{\dagger}(b)||.
\end{align*}
\end{proof}

The proof of the above theorem gives an alternate simple proof to \cite[Theorem 3.1]{isac-numerical}.

\begin{cor}
Let $H$ be a real Hilbert space and $K$ be a self-dual cone in $H$. If $T\in \mathcal{B}(H)$ is a self adjoint operator such that $m(T) >0 $, then for every solution $x$ of  $LCP(T, b)$, one has $||x|| \leq \frac{M(T)}{m(T)} ||T^{-1}(b)||.$
\end{cor}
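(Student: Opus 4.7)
The plan is to derive this as an immediate consequence of the preceding theorem by observing that the hypothesis $m(T)>0$ promotes the previous setting to one in which $T$ is actually invertible, so all the range restrictions disappear.

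First I would compare $m(T)$ and $m_r(T)$. Since $R(T^*)\subseteq H$, the infimum defining $m_r(T)$ is taken over a subset of the unit sphere appearing in the definition of $m(T)$, so $m_r(T)\geq m(T)>0$. In particular, Theorem \ref{posi-closed} applies: $T$ is positive semidefinite and $R(T)$ is closed.

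Next I would argue that $T$ is in fact invertible. The assumption $m(T)>0$ means $\langle Tx,x\rangle>0$ for every nonzero $x\in H$, so $N(T)=\{0\}$. Combined with $R(T)$ closed and $T=T^*$, the orthogonal decomposition $H=R(T)\oplus N(T^*)=R(T)\oplus N(T)$ yields $R(T)=H$. Therefore $T$ is a bijection with bounded inverse $T^{-1}=T^{\dagger}$, and moreover $R(T^*)=H$, which forces $m_r(T)=m(T)$.

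With these identifications, every $b\in H$ lies in $R(T)$ and every solution $x$ of $LCP(T,b)$ automatically lies in $R(T)=H=R(T^*)$. I can therefore invoke the previous theorem verbatim, giving
\[
\|x\|\leq \frac{M(T)}{m_r(T)}\|T^{\dagger}(b)\|=\frac{M(T)}{m(T)}\|T^{-1}(b)\|.
\]
There is no real obstacle here; the only thing to be careful about is verifying that $m(T)>0$ is strong enough to ensure both $R(T)=H$ (so that $T^{-1}$ makes sense and $T^{-1}=T^{\dagger}$) and $m_r(T)=m(T)$ (so that the constant in the previous bound sharpens to the one stated), which is what the first two paragraphs accomplish.
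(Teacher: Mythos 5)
Your proof is correct and follows the route the paper intends: the corollary is stated there without proof as an immediate specialization of the preceding theorem, and your argument supplies exactly the routine verifications implicit in that step ($m(T)>0$ gives $m_r(T)\geq m(T)>0$, hence $T$ positive definite with closed range, $N(T)=\{0\}$, so $R(T)=R(T^*)=H$, $T^{\dagger}=T^{-1}$ and $m_r(T)=m(T)$). Nothing is missing.
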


\begin{theorem}\label{last}
Let $H$ be a real Hilbert space and let $T\in \mathcal{B}(H)$ be a self adjoint closed range operator. If $x$ is a  solution of $LCP(T, b)$, where $b \in R(T)$, then
\begin{center}
$\langle x-x_b, T(x-x_b)\rangle=\frac{1}{4}\langle b, T^\dagger b\rangle ,$
\end{center}
where $x_b=-\frac{1}{2} T^\dagger b$.
\end{theorem}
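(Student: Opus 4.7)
The plan is to expand the left-hand side by bilinearity and reduce it using three inputs, each used exactly once: (a) self-adjointness of $T$ (so that $\langle Tu,v\rangle = \langle u,Tv\rangle$ in the real inner product), (b) the identity $TT^\dagger b = b$, which holds because $b\in R(T)$ and $T$ has closed range (fact (b) recalled in Section \ref{ndp}), and (c) the complementarity condition $\langle x, Tx+b\rangle = 0$, which rearranges to $\langle Tx,x\rangle = -\langle x,b\rangle$. A key auxiliary identity is $Tx_b = -\tfrac{1}{2}TT^\dagger b = -\tfrac{1}{2}b$, obtained from (b).

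First I would compute the pure quadratic term at $x_b$: substituting $x_b = -\tfrac{1}{2}T^\dagger b$ and $Tx_b=-\tfrac{1}{2}b$ gives $\langle Tx_b, x_b\rangle = \langle -\tfrac{1}{2}b, -\tfrac{1}{2}T^\dagger b\rangle = \tfrac{1}{4}\langle b, T^\dagger b\rangle$, which already produces the right-hand side of the claim. The task is then to show that the remaining three terms in the expansion cancel.

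Next I would expand $\langle x-x_b, T(x-x_b)\rangle = \langle Tx,x\rangle - \langle Tx,x_b\rangle - \langle Tx_b,x\rangle + \langle Tx_b,x_b\rangle$ and use (a) to combine the cross terms into $-2\langle Tx,x_b\rangle$. Substituting $x_b=-\tfrac{1}{2}T^\dagger b$ turns this into $\langle Tx, T^\dagger b\rangle$, and applying self-adjointness of $T$ a second time gives $\langle x, TT^\dagger b\rangle$, which by (b) equals $\langle x, b\rangle$. At this point the expansion has collapsed to $\langle Tx,x\rangle + \langle x,b\rangle + \tfrac{1}{4}\langle b, T^\dagger b\rangle$, and the first two terms vanish by (c), leaving the desired value.

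There is no genuine obstacle; the statement is essentially a completion-of-the-square identity for the quadratic $y\mapsto \langle Ty,y\rangle + 2\langle b,y\rangle$ evaluated at a zero of the gradient-type expression coming from the LCP. The only point requiring care is that the Moore--Penrose identity $TT^\dagger b = b$ must be invoked on the hypothesis $b\in R(T)$, since without this assumption $x_b$ would not be a well-behaved "shift" and the term $\langle Tx, T^\dagger b\rangle$ would not simplify to $\langle x,b\rangle$.
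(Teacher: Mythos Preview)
Your proposal is correct and follows essentially the same approach as the paper: expand $\langle x-x_b, T(x-x_b)\rangle$ bilinearly, simplify the cross terms and the $x_b$-term using self-adjointness together with $TT^\dagger b=b$ (valid since $b\in R(T)$), and then cancel $\langle x,Tx\rangle+\langle x,b\rangle$ via the complementarity condition. The paper's write-up is slightly terser but the ingredients and the order in which they are used are identical.
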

\begin{proof}
Let $b\in R(T)$ and $x_b=-\frac{1}{2}T^\dagger b$. Suppose that $x$ is a  solution of $LCP(T, b)$. Then
\begin{eqnarray}
\langle x-x_b, T(x-x_b)\rangle & =& \langle x,Tx\rangle + \frac{1}{2} \langle x,TT^\dagger b\rangle + \frac{1}{2} \langle T^\dagger b, T x\rangle + \frac{1}{4}\langle T^\dagger b, TT^\dagger b\rangle \nonumber \\
&=& \langle x,T x\rangle + \langle x, b\rangle +\frac{1}{4}\langle T^\dagger b, b\rangle \nonumber \\
&=&  \frac{1}{4}\langle b, T^\dagger b\rangle. \nonumber
\end{eqnarray}
\end{proof}

\begin{theorem}
Let $H$ be a real Hilbert space and  $T\in \mathcal{B}(H)$ be a self adjoint operator such that $m_r(T) >0 .$ Let $b \in R(T)$ and $x_b=-\frac{1}{2} T^\dagger b$.  Then for every solution $x\in R(T)$ of  $LCP(T, b)$, where $x\neq x_b$, one has
\begin{center}
$\frac{\| b \|}{2M(T)}\leq \| x-x_b \| \leq  \frac{\| b \|}{2m_r(T)}.$
\end{center}
\end{theorem}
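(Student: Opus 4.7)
The plan is to combine the identity from Theorem \ref{last} with the bracketing inequalities that $m_r$ and $M$ provide for the quadratic form $\langle \cdot, T\cdot\rangle$. Since $m_r(T)>0$, Theorem \ref{posi-closed} (together with its converse) gives that $T$ has closed range and is positive semidefinite, so $T^\dagger$ exists and, by the self-adjointness of $T$, $R(T^\dagger)=R(T^*)=R(T)$. In particular, both $x$ and $x_b = -\tfrac12 T^\dagger b$ lie in $R(T)$, hence so does $x-x_b$, and likewise $b\in R(T)=R(T^\dagger)$ puts $b$ inside the subspace over which the infima defining $m_r(T)$ and $m_r(T^\dagger)$ are taken.

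For the upper bound, I would apply the definition of $m_r$ to $x-x_b\in R(T)$ and the definition of $M$ to $T^\dagger$ on $b$:
\begin{equation*}
m_r(T)\,\|x-x_b\|^2 \le \langle x-x_b, T(x-x_b)\rangle,\qquad \langle b, T^\dagger b\rangle \le M(T^\dagger)\,\|b\|^2.
\end{equation*}
Substituting into the identity $\langle x-x_b, T(x-x_b)\rangle=\tfrac14\langle b, T^\dagger b\rangle$ of Theorem \ref{last}, and invoking Theorem \ref{mMinv} to replace $M(T^\dagger)$ by $1/m_r(T)$, yields $m_r(T)\|x-x_b\|^2 \le \|b\|^2/(4m_r(T))$, from which $\|x-x_b\|\le \|b\|/(2m_r(T))$ follows by taking square roots (the hypothesis $x\neq x_b$ is unnecessary here but is needed to make the lower bound nontrivial).

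For the lower bound, I would use the opposite inequalities in the same scheme:
\begin{equation*}
\langle x-x_b, T(x-x_b)\rangle \le M(T)\,\|x-x_b\|^2,\qquad m_r(T^\dagger)\,\|b\|^2 \le \langle b, T^\dagger b\rangle,
\end{equation*}
the second being valid because $b\in R(T^\dagger)$. Combining these with Theorem \ref{last} and the identity $m_r(T^\dagger)=1/M(T)$ from Theorem \ref{mMinv} gives $\|b\|^2/(4M(T))\le M(T)\|x-x_b\|^2$, i.e.\ $\|x-x_b\|\ge \|b\|/(2M(T))$.

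The argument is essentially a one-line chain of inequalities once the correct ingredients are assembled; the only genuine care point is verifying that $x-x_b$ belongs to $R(T^*)$ so that $m_r(T)$ may be applied to it, and that $b$ belongs to $R(T^\dagger\,{}^*)=R(T^\dagger)$ so that $m_r(T^\dagger)$ may be applied to it. Both reduce to the fact that $T$ self-adjoint with closed range forces $R(T)=R(T^*)=R(T^\dagger)$, which is exactly the setting guaranteed by $m_r(T)>0$ via Theorems \ref{posi-closed} and \ref{posi-closed-conv}.
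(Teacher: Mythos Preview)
Your proof is correct and follows essentially the same route as the paper: combine the identity of Theorem \ref{last} with the bracketing inequalities $m_r(T)\|x-x_b\|^2\le \langle x-x_b,T(x-x_b)\rangle\le M(T)\|x-x_b\|^2$ and $m_r(T^\dagger)\|b\|^2\le \langle b,T^\dagger b\rangle\le M(T^\dagger)\|b\|^2$, then convert via Theorem \ref{mMinv}. You add the useful explicit verification that $x-x_b\in R(T)=R(T^*)$ and $b\in R(T^\dagger)$, which the paper leaves implicit but is indeed needed to legitimately apply the $m_r$ bounds.
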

\begin{proof}
By Theorem \ref{last}, we have $\langle x-x_b, T(x-x_b)\rangle=\frac{1}{4}\langle b, T^\dagger b\rangle.$
Also, $\frac{1}{4} m_r(T^\dagger)\|b\|^2\leq \|x-x_b\|^2M(T)$, by the definition of $M(T)$ and $m_r(T)$.
Now, by Theorem \ref{mMinv}, we have
$\frac{1}{4M(T)}\|b\|^2\leq \|x-x_b\|^2M(T).$
Thus, we can conclude that
\begin{equation}
\frac{1}{2M(T)}\|b\|\leq \|x-x_b\|. \label{1}
\end{equation}
Similarly, by Theorem \ref{mMinv} and \ref{last}, we have
$m_r(T)\|x-x_b\|^2\leq \frac{1}{4}M(T^{\dagger})\|b\|^2= \frac{1}{4m_r(T)}\|b\|^2$.
Thus \begin{equation}
\|x-x_b\|\leq \frac{1}{2m_r(T)}\|b\|. \label{2}
\end{equation}
From equation \eqref{1} and \eqref{2},
\begin{center}
$\frac{\| b \|}{2M(T)}\leq \| x-x_b \| \leq  \frac{\| b \|}{2m_r(T)}.$
\end{center}
\end{proof}



\subsection{Example}
Let $H = l^2(\mathbb{Z})$ and $H_+ = \{(\dots, x_{-2}, x_{-1},\fbox{$x_{0}$},x_{1},x_{2}, \dots): x_i \geq 0 ~\mbox{for all}~ i\}.$ Then $H_+$ is a self-dual cone.  Define $T : H \rightarrow H$ as $T((\dots, x_{-2}, x_{-1},\fbox{$x_{0}$},x_{1},x_{2}, \dots)) = (\dots, 0, 0, \fbox{$x_{0}$},x_{1},x_{2}, \dots)$. Then, $T$ is an orthogonal projection on to the space $l^2(\mathbb{N})$. So, $T = T^2 = T^*$ and $T = T^\dagger$. Consider  the vector $b = (\dots, 0, 0, \fbox{$0$}, \frac{-1}{3},1,\frac{1}{2}, \frac{1}{3}, \dots) \in R(T)$, it is easy to verify that $x = (\dots, 0, 0, \fbox{$0$}, \frac{1}{3},0,0, \dots)$ solves the associated LCP. It is clear that, $\Vert x \Vert \leq \Vert b \Vert$.  Also, $\Vert x - x_b \Vert = \frac{\sqrt{37}}{6} = \frac{\Vert b \Vert}{2}$. Indeed, in the vector $b$, if we may replace the entry $2$ by any real number.

It may be observed that, more generally, any orthogonal projection satisfies the assumptions of our theorem.\\

\textbf{Acknowledgement:}
Projesh Nath Choudhury  was partially supported by National Post-Doctoral Fellowship(PDF/2019/000275), the SERB, Department of Science and Technology, India,  and the NBHM Post-Doctoral Fellowship (0204/11/2018/R$\&$D-II/6437) from DAE (Govt. of India). 
M. Rajesh Kannan would like to thank the SRIC, IIT Kharagpur, the SERB, Department of Science and Technology, India, for financial support through the projects ISIRD,  MATRICS (MTR/2018/000986) and Early Career Research Award (ECR/2017/000643).

\bibliographystyle{amsplain}
\bibliography{gen-inv-lcp}

\end{document}